\theoremstyle{plain}
 \newtheorem{theorem}{Theorem}[section]
 \newtheorem{lem}{Lemma}[section]
\theoremstyle{Definition}
 \newtheorem{exm}{Example}[section]
 \newtheorem{dfn}{Definition}[section]
\theoremstyle{remark}
 \numberwithin{equation}{section}
\renewcommand{\setminus}{\smallsetminus}
\title[A Study of Fatou Set, Julia set and Escaping Set in Conjugate...]{A Study of Fatou Set, Julia set and Escaping Set in Conjugate Transcendental Semigroup}
\subjclass[2010]{37F10, 30D05}
\keywords{Transcendental semigroup,  nearly abelian semigroup, commutator, conjugate transcendental semigroup etc.}
\author[B. H. Subedi]{\bfseries  Bishnu Hari Subedi}
\address{ %% Put here your affiliation; street address is not required
Central Department of Mathematics \\ % \hfill (Received 00 00 201?)\\
Institute of Science and Technology   \\ %\hfill (Revised  00 00 201?)\\
Tribhuvan University   \\ %\hfill (Revised  00 00 201?)\\
Kirtipur, Kathmandu\\
Nepal}
\email{subedi.abs@gmail.com / subedi\_bh@cdmathtu.edu.np }
\author[A. Singh]{Ajaya Singh}
\address{Central Department of Mathematics, Institute of Science and Technology, Tribhuvan University, Kirtipur, Kathmandu, Nepal }
\email{singh.ajaya1@gmail.com / singh\_a@cdmathtu.edu.np} 
\thanks{This research work of first author is supported by the PhD faculty fellowship of University Grants Commission, Nepal} %% optional
\begin{document}

{\begin{flushleft}\baselineskip9pt\scriptsize
%PUBLICATIONS DE L'INSTITUT MATH\'EMATIQUE\newline
%Nouvelle s\'erie, tome ??(1??)) (201?), od--do \hfill DOI: \\
%MANUSCRIPT
\end{flushleft}}
\vspace{18mm} \setcounter{page}{1} \thispagestyle{empty}

\begin{abstract}
We define commutator of a transcendental semigroup, and on the basis of this concept, we define conjugate semigroup. We prove that the conjugate semigroup is nearly abelian if and only if  the given semigroup is nearly abelian. We also prove that image of each of escaping set, Julia set and Fatou set  under commutator (affine complex conjugating maps) is equal to respectively escaping set, Julia set and Fatou set of conjugate semigroup. Finally, we prove that every element of the nearly abelian semigroup $ S $ can be written as the composition of an element  from the set generated by the  set of commutators $ \Phi(S) $ and the composition of the certain powers of its generators.
\end{abstract}

\maketitle
\section{Introduction}
Throughout this paper, we denote the \textit{complex plane} by $\mathbb{C}$ and set of integers greater than zero by $\mathbb{N}$. 
We assume the function $f:\mathbb{C}\rightarrow\mathbb{C}$ is \textit{transcendental entire function} (TEF) unless otherwise stated. 
For any $n\in\mathbb{N}, \;\; f^{n}$ always denotes the nth \textit{iterates} of $f$. Let $ f $ be a TEF. The set of the form
$$
I(f) = \{z\in \mathbb{C}:f^n(z)\rightarrow \infty \textrm{ as } n\rightarrow \infty \}
$$
is called an \textit{escaping set} and any point $ z \in I(S) $ is called \textit{escaping point}. For TEF $f$, the escaping set $I(f)$ was first studied by A. Eremenko \cite{ere}. He showed that 
 $I(f)\not= \emptyset$; the boundary of this set is a Julia set $ J(f) $ (that is, $ J(f) =\partial I(f) $);
 $I(f)\cap J(f)\not = \emptyset$; and 
 $\overline{I(f)}$ has no bounded component.  Note that the complement of Julia set $ J(f) $ in complex plane $ \mathbb{C} $ is a \textit{Fatou set} $F(f)$.
 
We confine our study on Fatou set, Julia set and escaping set of transcendental semigroup and its conjugate semigroup.  It is very obvious fact that a set of transcendental entire maps on $ \mathbb{C} $  naturally forms a semigroup. Here, we take a set $ A $ of transcendental entire maps and construct a semigroup $ S $ consists of all elements that can be expressed as a finite composition of elements in $ A $. We call such a semigroup $ S $ by \textit{transcendental semigroup} generated by set $ A $. 
Our particular interest is to study of the dynamics of the families of transcendental entire maps.  For a collection $\mathscr{F} = \{f_{\alpha}\}_{\alpha \in \Delta} $ of such maps, let 
$$
S =\langle f_{\alpha} \rangle
$$ 
be a \textit{transcendental semigroup} generated by them. The index set $ \Delta $ to which $ \alpha $  belongs is allowed to be infinite in general unless otherwise stated. 
Here, each $f \in S$ is a transcendental entire function and $S$ is closed under functional composition. Thus, $f \in S$ is constructed through the composition of finite number of functions $f_{\alpha_k},\;  (k=1, 2, 3,\ldots, m) $. That is, $f =f_{\alpha_1}\circ f_{\alpha_2}\circ f_{\alpha_3}\circ \cdots\circ f_{\alpha_m}$. 

A semigroup generated by finitely many transcendental functions $f_{i}, (i = 1, 2, \\  \ldots, n) $  is called \textit{finitely generated transcendental semigroup}. We write $S= \langle f_{1}, f_{2},\\  \ldots,f_{n} \rangle$. The transcendental semigroup $S$ is \textit{abelian} if  $f_i\circ f_j =f_j\circ f_i$  for all generators $f_{i}$ and $f_{j}$  of $ S $. The semigroup $ S $ is right cancellative if $ f \circ g = h \circ g \Longrightarrow f = h $, left cancellative if $ f \circ g = h \circ h \Longrightarrow g = f$ for all $ f, g, h \in S $ and cancellative if it is both right and left cancellative. 

The family $\mathscr{F}$  of complex analytic maps forms a \textit{normal family} in a domain $ D $ if given any composition sequence $ (f_{\alpha}) $ generated by the member of  $ \mathscr{F} $,  there is a subsequence $( f_{\alpha_{k}}) $ which is uniformly convergent or divergent on all compact subsets of $D$. If there is a neighborhood $ U $ of the point $ z\in\mathbb{C} $ such that $\mathscr{F} $ is normal family in $U$, then we say $ \mathscr{F} $ is normal at $ z $. If  $\mathscr{F}$ is a family of members from the transcendental semigroup $ S $, then we simply say that $ S $ is normal in the neighborhood of $ z $ or $ S $ is normal at $ z $. 
Semigroup $ S $ is \textit{iteratively divergent} at $ z $ if $f^n(z)\rightarrow \infty \; \textrm{as} \; n \rightarrow \infty$ for all $ f \in S $. 

Based  on the Fatou-Julia-Eremenko theory of a complex analytic function, the Fatou set, Julia set and escaping set in the settings of transcendental  semigroup are defined as follows.
\begin{dfn}[\textbf{Fatou set, Julia set and escaping set}]\label{2ab} 
\textit{Fatou set} of the transcendental semigroup $S$ is defined by
  $$
  F (S) = \{z \in \mathbb{C}: S\;\ \textrm{is normal in a neighborhood of}\;\ z\}
  $$
and the \textit{Julia set} $J(S) $ of $S$ is the compliment of $ F(S) $ where as the escaping set of $S$ is defined by 
$$
I(S)  = \{z \in \mathbb{C}: S \;  \text{is iteratively divergent at} \;z \}.
$$
We call each point of the set $  I(S) $ by \textit{escaping point}.        
\end{dfn} 

There is a slightly larger family of transcendental  semigroups that can  fulfill most of the results of abelian transcendental semigroup. We call these semigroups nearly abelian and it is considered the more general form than that of abelian semigroups.
\begin{dfn}[\textbf{Nearly abelian semigroup}] \label{1p}
We say that a transcendental semigroup $ S $ is \textit{nearly abelian} if there is a  family  $ \Phi = \{\phi_{i} \} $ of  conformal maps  such that
\begin{enumerate}
\item $ \phi_{i}(F(S)) = F(S) $ for all $ \phi_{i}\in \Phi $ and
\item for all $ f, g \in S $, there is a $ \phi \in \Phi $ such that $ f \circ g = \phi \circ g\circ f  $.
\end{enumerate}
\end{dfn}
\begin{dfn}[\textbf{Commutator}]\label{com}
Let $ S $ be a transcendental semigroup. The set of the form
$$\Phi(S) = \{\phi:\; \text{there are}\;\ f,  g\in S\;\ \text{such that}\;\ f\circ g = \phi \circ g\circ f\}  $$ 
is called the set of \textit{commutators} of $ S $. We write $ \phi = [f, g] $ if $ f \circ g = \phi \circ g\circ f  $.  
\end{dfn}
The notion of commutator is very useful to obtain conjugate maps of each generator $ f_{i} $ of the semigroup $ S $ and conjugate semigroup of the semigroup $ S $.
\begin{dfn}[\textbf{conjugate semigroup}]\label{csg}
Let $S =\langle f_{1}, f_{2}, f_{3}, \ldots, f_{n} \rangle$ be a finitely generated transcendental semigroup and $ \Phi(S) $ be a set of its commutators. 
Let us define a set 
\begin{equation}\label{1t}
S^{'} = \langle \phi \circ  f_{1} \circ \phi^{-1}, \; \phi \circ f_{2} \circ \phi^{-1}, \ldots,  \; \phi \circ f_{n} \circ \phi^{-1} \rangle 
\end{equation} 
where $ \phi \in \Phi(S) $ such that $ \phi = [f_{i}, f_{j}]$ and $ \phi^{-1}  = [f_{j}, f_{i}]$ as we defined before.  If we let
$ g_{i} = \phi \circ  f_{i} \circ \phi^{-1} $, then we say function $ f_{i}$ is conjugate to  $g_{i}$ by a map $ \phi : \mathbb{C}\rightarrow \mathbb{C}$. The semigroup $ S^{'} $ is then called a \textit{conjugate semigroup} of semigroup $ S $. 
\end{dfn}
The image of the Fatou set, Julia set and escaping set of nearly abelian semigroup  under commutator $ \phi \in \Phi(S) $ is respectively Fatou set, Julia set and  escaping set of its conjugate.
\begin{theorem}\label{cgs2}
Let $S =\langle f_{1}, f_{2}, f_{3}, \ldots, f_{n} \rangle$ be a nearly abelian transcendental semigroup with commutators the maps of the form $\phi(z) = az + b $ for some non-zero $ a $ and $S^{'} = \langle \phi \circ  f_{1} \circ \phi^{-1},\;  \phi \circ f_{2} \circ \phi^{-1}, \ldots, \phi \circ f_{n} \circ \phi^{-1} \rangle$ be a conjugate semigroup of $ S $. Then $ \phi (I(S)) = I(S^{'}), \; \phi (J(S)) =  J(S^{'}) $ and $\phi (F(S)) = F(S^{'})$.
\end{theorem}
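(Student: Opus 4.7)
\medskip

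\textbf{Proof plan.} The plan is to exploit the algebraic identity that every element of $S^{'}$ is a $\phi$-conjugate of a corresponding element of $S$, and then transfer each dynamical notion (escaping, Fatou, Julia) across this conjugation using the fact that $\phi(z)=az+b$ is a conformal automorphism of $\mathbb{C}$ with $|\phi(w)|\to\infty$ iff $|w|\to\infty$.

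First I would establish the key telescoping observation: if $g_{i}=\phi\circ f_{i}\circ\phi^{-1}$, then for any finite composition of generators,
\[
g_{i_{1}}\circ g_{i_{2}}\circ\cdots\circ g_{i_{m}} \;=\; \phi\circ\bigl(f_{i_{1}}\circ f_{i_{2}}\circ\cdots\circ f_{i_{m}}\bigr)\circ\phi^{-1},
\]
since each interior pair $\phi^{-1}\circ\phi$ cancels. Hence the map $f\mapsto \phi\circ f\circ\phi^{-1}$ is a bijection $S\to S^{'}$, and moreover $g^{n}=\phi\circ f^{n}\circ\phi^{-1}$ whenever $g=\phi\circ f\circ\phi^{-1}$. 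This will be the single algebraic fact that drives the rest.

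Next I would prove $\phi(I(S))=I(S^{'})$. For $z\in I(S)$ and any $g\in S^{'}$, writing $g=\phi\circ f\circ\phi^{-1}$ gives $g^{n}(\phi(z))=\phi(f^{n}(z))$. Since $f^{n}(z)\to\infty$ and $\phi$ is affine, $\phi(f^{n}(z))\to\infty$; so $\phi(z)\in I(S^{'})$. The reverse inclusion is identical with the roles of $\phi$ and $\phi^{-1}$ swapped, using that $\phi^{-1}\in\Phi(S)$ and that conjugating $S^{'}$ by $\phi^{-1}$ recovers $S$.

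For $\phi(F(S))=F(S^{'})$ I would argue by normality on neighborhoods. If $z\in F(S)$ with $S$ normal on an open neighborhood $U$, then $\phi(U)$ is an open neighborhood of $\phi(z)$. Given any composition sequence $(g_{k})$ in $S^{'}$, write $g_{k}=\phi\circ f_{k}\circ\phi^{-1}$; extract a subsequence of $(f_{k})$ that converges locally uniformly (or diverges) on $U$, and observe that precomposition by the conformal $\phi^{-1}$ carries compact subsets of $\phi(U)$ to compact subsets of $U$, while postcomposition by $\phi$ preserves local uniform convergence as well as local uniform divergence (because $\phi$ is a homeomorphism of $\widehat{\mathbb{C}}\setminus\{\infty\}$ sending $\infty$ to $\infty$). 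Hence the corresponding subsequence of $(g_{k})$ has the required behavior on $\phi(U)$, so $\phi(z)\in F(S^{'})$. The reverse inclusion again follows by swapping $\phi$ and $\phi^{-1}$. The Julia case is then immediate: since $\phi$ is a bijection of $\mathbb{C}$ and $J(S)=\mathbb{C}\setminus F(S)$, $J(S^{'})=\mathbb{C}\setminus F(S^{'})$, the identity $\phi(F(S))=F(S^{'})$ forces $\phi(J(S))=J(S^{'})$.

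The main obstacle, I expect, is not any of the three inclusions themselves but rather the bookkeeping in the telescoping step: one must verify that \emph{every} element of $S^{'}$ arises as a $\phi$-conjugate of a single element of $S$ with the \emph{same} conjugating map $\phi$ throughout, so that $g^{n}=\phi\circ f^{n}\circ\phi^{-1}$ holds. This is where the choice in Definition \ref{csg} of one fixed $\phi\in\Phi(S)$ (rather than allowing different commutators for different generators) is essential, and it is the only point where something could go wrong if one were careless; everything afterwards is a routine transfer using the conformality and the asymptotic behavior of the affine map $\phi$.
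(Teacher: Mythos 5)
Your proposal is correct, but it takes a genuinely different route from the paper. The paper first invokes the structure theorem for nearly abelian semigroups ({\cite[Theorem 1.1]{sub5}}), which gives $I(S)=I(f)$, $J(S)=J(f)$, $F(S)=F(f)$ for every single $f\in S$; it then applies the single-function semiconjugacy statement (Lemma \ref{cgs3}, via $\phi\circ f=g\circ\phi$) together with Theorem \ref{csg1} (so that $S^{'}$ is also nearly abelian and the same reduction applies to it), and concludes $\phi(I(S))=\phi(I(f))=I(g)=I(S^{'})$. You instead work directly at the semigroup level: the telescoping identity $g_{i_{1}}\circ\cdots\circ g_{i_{m}}=\phi\circ(f_{i_{1}}\circ\cdots\circ f_{i_{m}})\circ\phi^{-1}$ makes $f\mapsto\phi\circ f\circ\phi^{-1}$ a surjection $S\to S^{'}$, and then you transfer iterative divergence and normality across the affine homeomorphism $\phi$ without ever reducing to a single function. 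What your approach buys is self-containment and generality: you never need the cited result from \cite{sub5}, you do not need Theorem \ref{csg1}, and in fact the nearly abelian hypothesis plays no role in your argument beyond guaranteeing that the commutator $\phi$ exists and is affine --- your proof establishes $\phi(I(S))=I(S^{'})$, $\phi(F(S))=F(S^{'})$, $\phi(J(S))=J(S^{'})$ for an arbitrary transcendental semigroup conjugated by an affine map. What the paper's route buys is brevity once the nearly abelian machinery is in place, since everything collapses to the classical one-function conjugacy statement. Your identification of the telescoping step as the one place requiring care (a single fixed $\phi$ for all generators, as Definition \ref{csg} stipulates) is exactly right, and your direct treatment of normality for the Fatou set is arguably cleaner than the paper's, which disposes of $F$ and $J$ only by appeal to $J(f)=\partial I(f)$ and complementation inside Lemma \ref{cgs3}.
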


Analogous to {\cite [Theorem 4.3]{hin}}, every function of the nearly abelian  transcendental semigroup $ S $ can be written as the composition of either an element  of commutator $ \Phi(S) $ or an element from the set generated by $ \Phi(S) $ and the composition of the certain powers of its generators.

\begin{theorem}\label{cgs4}
Let  $S =\langle f_{1}, f_{2}, f_{3}, \ldots, f_{n},\ldots \rangle$ be a nearly abelian cancellative transcendental semigroup. Then every element $ f \in S $ can be written in the form 
$$
f = \phi \circ f^{t_{1}}_{1}\circ f^{t_{2}}_{2}\circ f^{t_{3}}_{3}\circ\cdots f^{t_{m}}_{m} 
$$
where $ \phi \in \Phi(S) $ if $  \Phi(S)  $ is a group or semigroup otherwise $  \phi \in  G$, where $ G =\langle\Phi(S) \rangle $  is a group generated by  $ \Phi(S) $ and $ t_{i} $ are non-negative integers.
\end{theorem}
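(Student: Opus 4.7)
The plan is strong induction on the word length $k$ of a factorization $f = h_1 \circ h_2 \circ \cdots \circ h_k$, with each $h_i$ a generator. For the base case $k = 1$, write $f = f_i$ and take $\phi$ to be the identity map (the neutral element of $G$), $t_i = 1$, and $t_l = 0$ for $l \neq i$; this is trivially in the prescribed form.

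For the inductive step, write $f = g \circ f_j$ where $g$ has word length $k$. By the inductive hypothesis, $g = \phi_0 \circ f_1^{s_1} \circ f_2^{s_2} \circ \cdots \circ f_m^{s_m}$ for some $\phi_0 \in G$ and non-negative integers $s_1, \ldots, s_m$. It then remains to move the trailing $f_j$ leftward past the blocks $f_m^{s_m}, f_{m-1}^{s_{m-1}}, \ldots, f_{j+1}^{s_{j+1}}$ so that it can merge with $f_j^{s_j}$ into $f_j^{s_j+1}$. I would do this by a bubble-sort procedure based on the nearly abelian identity of Definition~\ref{1p}: for $l > j$ one has $f_l^{s_l} \circ f_j = \psi_l \circ f_j \circ f_l^{s_l}$ for some $\psi_l \in \Phi(S)$, obtained by applying the nearly abelian relation to the pair $f_l^{s_l}, f_j \in S$. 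Each such swap moves $f_j$ one block to the left at the cost of inserting a commutator in the middle of the expression, which then has to be transported all the way to the leftmost position to be absorbed into $\phi_0$.

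The hard part is precisely this push-through of interior commutators across the sorted generators to their left. Two facts keep it viable: first, every commutator is a conformal self-map of $\mathbb{C}$, hence affine, so arbitrary compositions of commutators remain in $G = \langle \Phi(S) \rangle$; second, the left cancellativity of $S$, together with the invariance $\phi(F(S)) = F(S)$ built into the nearly abelian hypothesis, pins down a unique replacement $\psi' \in G$ satisfying the required push-through identity $f_l \circ \psi = \psi' \circ f_l$. Iterating this across all generators lying to the left yields
\[
f = \phi \circ f_1^{s_1} \circ \cdots \circ f_{j-1}^{s_{j-1}} \circ f_j^{s_j+1} \circ f_{j+1}^{s_{j+1}} \circ \cdots \circ f_m^{s_m},
\]
with $\phi$ a finite product of elements of $\Phi(S)$; thus $\phi \in G$, and $\phi \in \Phi(S)$ in the sharper case where $\Phi(S)$ is already closed under composition. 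The principal hurdle in making the argument rigorous is precisely this push-through step, for which the affine form of the commutators and the cancellative hypothesis are both indispensable.
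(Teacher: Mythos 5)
Your overall strategy --- induction on word length, a bubble sort of the trailing generator into position via the nearly abelian relation of Definition~\ref{1p}, and a push-through of the resulting interior commutators to the far left --- is the same skeleton the paper uses; in fact the paper's own proof of Theorem~\ref{cgs4} is the single line ``apply Lemma~\ref{lemma} inductively,'' so your write-up is more explicit about the combinatorics than the original. The problem is that you have not proved the one step that carries all the mathematical content, namely the push-through identity $f_{l}\circ\psi = \psi'\circ f_{l}$ for $\psi\in\Phi(S)$ and $f_{l}\in S$. Your justification --- that cancellativity together with the invariance $\phi(F(S))=F(S)$ ``pins down a unique replacement $\psi'$'' --- addresses uniqueness, not existence. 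Uniqueness is indeed easy (if $\psi'\circ f_{l}=\psi''\circ f_{l}$ then $\psi'=\psi''$ on the range of $f_{l}$, hence everywhere, by Picard and continuity), but nothing you have said produces a candidate $\psi'$ in the first place: $f_{l}$ is transcendental entire, so you cannot set $\psi'=f_{l}\circ\psi\circ f_{l}^{-1}$, and the invariance of the Fatou set plays no role here. You correctly flag this as ``the principal hurdle,'' but flagging it is not the same as crossing it.

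The missing step is precisely Lemma~\ref{lemma} of the paper, and its proof is the only genuinely nontrivial point of the whole theorem. Given $\psi\in\Phi(S)$, choose witnesses $g,h\in S$ with $g\circ h=\psi\circ h\circ g$. Then on one hand $f\circ g\circ h=f\circ\psi\circ h\circ g$, and on the other hand two applications of the nearly abelian relation give $f\circ g\circ h=\xi_{1}\circ g\circ f\circ h=\xi_{1}\circ\xi_{2}\circ f\circ h\circ g$ for some $\xi_{1},\xi_{2}\in\Phi(S)$. Cancelling the common right factor $h\circ g$ (in the paper's terminology this is \emph{right} cancellation, not the left cancellativity you invoke) yields $f\circ\psi=\xi_{1}\circ\xi_{2}\circ f$ with $\xi_{1}\circ\xi_{2}\in G=\langle\Phi(S)\rangle$. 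With this lemma in hand your bubble-sort argument does close up, and it also explains why the conclusion in general only places $\phi$ in $G$ rather than in $\Phi(S)$: the push-through produces finite products of commutators, which need not themselves be commutators. You should supply this lemma (or cite it) explicitly rather than leaving the push-through as an assertion.
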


\section{The Notion Conjugate Semigroup and  Proof of the Theorem \ref{cgs2}}

Let $ S $ be a transcendental semigroup.  If there is a holomorphic function $ \phi $ such that $f\circ g = \phi \circ g\circ f  $ for every pair of functions  $ f, g \in S $, then $ \phi $ is called commutator of $ f $ and $ g $. Note that such a commutator is unique for every pair of transcendental entire functions. 
Recall that 
$$\Phi(S) = \{\phi:  f\circ g = \phi \circ g\circ f\; \text{for every pair of functions}\; f,  g\in S \}$$
is a set of commutators of transcendental semigroup $ S $. If $ S $ is abelian, then commutator $ \phi $ is an identity function. 
As in definition \ref{com}, we write $ \phi =[f, g] $ if $ f\circ g = \phi \circ g\circ f$. Note that  $ [f, g]^{-1} = [g, f] $ and for any $ f\in S, \; [f, f]$ =  identity. So,  in  $\Phi(S)$, there is an identity element and inverse of each $ \phi\in\Phi(S) $.  It is not clear in general whether $\Phi(S)$ has group structure or semigroup structure but we can make a group or semigroup $ G = \langle \Phi(S) \rangle $ generated by the elements of $ \Phi(S) $ whenever it is necessary. Note that there are some commutator identities of groups which can be verified in $ \Phi(S) $ if $ S $ is a cancellative transcendental semigroup. For example:
\begin{enumerate}
\item $ [f,\;  g \circ f^{n}] = [f,\; g] $.
\item $[f,\; f^{n} \circ g] \circ f^{n} = f^{n} \circ [f,\; g]$.
\item $[f\circ g, \; g \circ f] \circ g \circ f = f \circ g \circ [g,\; f] $ 
\end{enumerate}
In practice, there is a commutator for given pair of transcendental entire functions. For example: 
\begin{exm}
Let $ f(z) = e^{z^{2}} + \lambda $ and $ g(z) = - f(z)$, where $ \lambda \in \mathbb{C} $. It is easy to see that  $ (f \circ g)(z) = e^{(z^{2} + \lambda )^{2}}+ \lambda = \phi(-e^{(z^{2} + \lambda)^{2}}- \lambda) = (\phi \circ g \circ f)(z)$, where $ \phi(z) = -z $.  Likewise,   if $f(z) = \lambda \cos z,  \; (\lambda \in \mathbb{C})  $ and $ g(z) = -f(z) $, then $ (f \circ g)(z) = \lambda \cos (\lambda \cos z) = \phi (-\lambda \cos (\lambda \cos z)) =(\phi \circ g \circ f)(z) $, where $ \phi(z) = -z $. 
\end{exm}

The family  $\Phi(S)$ of  holomorphic functions $ \phi $ for which $f \circ g = \phi \circ g\circ f$ for some  $ f, g \in S $ is assumed to be pre-compact. This means that any sequence $ (\phi_{i}) $ of elements of $ \Phi(S) $ contains a subsequence $ (\phi_{i_{k}}) $ that converges to holomorphic function (but not to a constant function) uniformly on $ \mathbb{C} $. 

On the basis of the notion of commutator, we can get  conjugate map of each generator $ f_{i} $ of the semigroup $ S $ and conjugate semigroup of the semigroup $ S $ as defined in above definition \ref{csg}.  If our semigroup is nearly abelian, our most of the tasks on commutators  will be more easy so that dynamics on such semigroups can be handled in a right track. We prove the following result that shows conjugate semigroup is nearly abelian if and only if semigroup itself is nearly abelian.

\begin{theorem}\label{csg1}
The conjugate  semigroup $S^{'} = \langle \phi \circ f_{1} \circ \phi^{-1},\;  \phi \circ f_{2} \circ \phi^{-1},\;  \ldots, \phi \circ f_{n} \circ \phi^{-1} \rangle$ of a transcendental semigroup $S =\langle f_{1}, f_{2}, f_{3}, \ldots, f_{n} \rangle$ is nearly abelian if and only if $ S $ is nearly abelian.
\end{theorem}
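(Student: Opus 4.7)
The plan is to exploit the algebraic identity $(\phi\circ h_{1}\circ\phi^{-1})\circ(\phi\circ h_{2}\circ\phi^{-1}) = \phi\circ(h_{1}\circ h_{2})\circ\phi^{-1}$. Together with the definition of $S'$ in \eqref{1t}, this shows by induction on word length that every element of $S'$ has the form $\phi\circ h\circ\phi^{-1}$ for some $h\in S$, and conversely every element of $S$ equals $\phi^{-1}\circ h'\circ\phi$ for the corresponding $h'\in S'$. The two semigroups are thus in bijective correspondence under conjugation by $\phi$, which will make the forward and reverse implications symmetric.

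For the forward direction, assume $S$ is nearly abelian with commutator family $\Phi_{S}$. Pick arbitrary $f',g'\in S'$ and write $f'=\phi\circ f\circ\phi^{-1}$, $g'=\phi\circ g\circ\phi^{-1}$ with $f,g\in S$. A direct calculation gives $f'\circ g' = \phi\circ(f\circ g)\circ\phi^{-1}$. Applying condition~(2) of Definition~\ref{1p} to $S$, there is $\psi\in\Phi_{S}$ with $f\circ g=\psi\circ g\circ f$. Inserting $\phi^{-1}\circ\phi$ between $\psi$ and $g\circ f$ and regrouping produces
\[
f'\circ g' \;=\; (\phi\circ\psi\circ\phi^{-1})\circ g'\circ f',
\]
so the family $\Phi_{S'}:=\{\phi\circ\psi\circ\phi^{-1}:\psi\in\Phi_{S}\}$, consisting of conformal maps (composition of conformals), satisfies condition~(2) of Definition~\ref{1p} for~$S'$.

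The main obstacle is verifying condition~(1), namely $(\phi\circ\psi\circ\phi^{-1})(F(S'))=F(S')$. Here I would invoke Theorem~\ref{cgs2}: since a conformal entire self-map of $\mathbb{C}$ is necessarily affine by Liouville's theorem, each $\phi\in\Phi(S)$ has the form $az+b$, so the hypothesis of that theorem is automatic and $F(S')=\phi(F(S))$. Then the chain $\phi^{-1}(F(S'))=F(S)$, $\psi(F(S))=F(S)$, $\phi(F(S))=F(S')$ gives condition~(1). This step is the only place where one has to link the Fatou sets of $S$ and $S'$ rather than argue purely on the level of compositions, and it is the reason Theorem~\ref{cgs2} must be in place before this theorem.

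The converse is handled by symmetry: conjugating the generators of $S'$ by $\phi^{-1}$ yields precisely the generators of $S$, so $S$ is itself a conjugate semigroup of $S'$. Running the argument above with $\phi$ and $\phi^{-1}$ interchanged shows that if $S'$ is nearly abelian with commutator family $\Phi_{S'}$, then $S$ is nearly abelian with commutator family $\{\phi^{-1}\circ\psi'\circ\phi:\psi'\in\Phi_{S'}\}$, completing the equivalence.
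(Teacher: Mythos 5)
Your treatment of condition (2) of Definition \ref{1p} is a genuine improvement on the paper's own argument. The paper keeps $\Phi(S)$ itself as the commutator family for $S'$, passing from $\phi \circ \xi \circ f_{j} \circ f_{i} \circ \phi^{-1}$ to $\xi \circ \phi \circ f_{j} \circ f_{i} \circ \phi^{-1}$, which silently assumes that the conjugating map $\phi$ commutes with every commutator $\xi$; nothing in the hypotheses guarantees this. Your choice of $\Phi_{S'}=\{\phi\circ\psi\circ\phi^{-1}:\psi\in\Phi_{S}\}$ sidesteps that assumption entirely, since the identity $f'\circ g'=(\phi\circ\psi\circ\phi^{-1})\circ g'\circ f'$ is a pure regrouping. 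You are also right that condition (1) must be verified for $S'$ --- a point the paper's proof omits altogether, in both directions.

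However, the way you discharge condition (1) is circular given how the paper is organized. You invoke Theorem \ref{cgs2} to obtain $F(S')=\phi(F(S))$, but the paper's proof of Theorem \ref{cgs2} explicitly cites Theorem \ref{csg1} (``By the theorem \ref{csg1}, semigroup $S'$ is nearly abelian\dots''), so Theorem \ref{cgs2} cannot be in place before the present theorem. The repair is to prove $F(S')=\phi(F(S))$ directly, with no near-abelian hypothesis: as a family of maps, $S'$ is exactly $\{\phi\circ h\circ\phi^{-1}: h\in S\}$, and since $\phi$ is an affine homeomorphism of $\mathbb{C}$, the family $S'$ is normal in a neighborhood of $\phi(z)$ if and only if $S$ is normal in a neighborhood of $z$; hence $F(S')=\phi(F(S))$ outright, and your chain $\phi^{-1}(F(S'))=F(S)$, $\psi(F(S))=F(S)$, $\phi(F(S))=F(S')$ then closes condition (1). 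With that substitution your argument is complete and, in the two respects above, more careful than the one in the paper.
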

\begin{proof}
Let $ S $ is a nearly abelian transcendental semigroup. Then $f_{i}\circ f_{j} = \phi \circ f_{j}\circ f_{i}$ for all generators $ f_{i},\; f_{j}\in S $ and $ \phi \in \Phi (S) $. Now for any $  \phi \circ  f_{i} \circ \phi^{-1},\;   \phi \circ  f_{j} \circ \phi^{-1} \in S^{'}$, we have 
\begin{align*}
(\phi \circ  f_{i} \circ \phi^{-1}) \circ  (\phi \circ  f_{j} \circ \phi^{-1}) =  & \phi \circ  f_{i} \circ  f_{j} \circ \phi^{-1} \\
               =  &   \phi \circ \xi \circ f_{j} \circ    f_{i} \circ \phi^{-1}\;\; \text{for some}\;\; \xi \in \Phi(S) \\
               =  & \xi \circ \phi \circ f_{j} \circ    f_{i} \circ \phi^{-1}\\
                =  & \xi \circ (\phi \circ f_{j} \circ \phi^{-1}) \circ (\phi \circ f_{i} \circ \phi^{-1})\\
\end{align*}
This shows that the conjugate semigroup $S^{'} $ of a nearly abelian transcendental semigroup $ S $ is a nearly abelian. 

Conversely, suppose that semigroup $ S^{'} $ is nearly abelian.  Then $g_{i}\circ g_{j} = \phi \circ g_{j}\circ g_{i}$ for $ \phi \in \Phi (S) $ and for all generators $ g_{i}, g_{j}\in S^{'} $, where $ g_{i} = \phi \circ  f_{i} \circ \phi^{-1} $ and $ g_{j} = \phi \circ  f_{j} \circ \phi^{-1} $ and from which get $ f_{i} = \phi^{-1} \circ  g_{i} \circ \phi $ and $ f_{j} = \phi^{-1} \circ  f_{j} \circ \phi $. Now, for any $ f_{i}, \; f_{j} \in S $, we have
\begin{align*}
f_{i} \circ f_{j} = & (\phi^{-1} \circ  g_{i} \circ \phi) \circ (\phi^{-1} \circ  g_{j} \circ \phi)  \\
                        = & \phi^{-1} \circ  g_{i} \circ  g_{j} \circ\phi \\
                        = & \phi^{-1} \circ \phi \circ g_{j} \circ  g_{i} \circ\phi \\
                        = & g_{j} \circ g_{i} \circ \phi \\
                        = & \phi \circ  f_{j} \circ \phi^{-1}  \circ    \phi \circ  f_{i} \circ \phi^{-1} \circ \phi \\
                        = & \phi \circ f_{j} \circ f_{i}
\end{align*}
This shows that semigroup $ S $ is nearly abelian if its conjugate semigroup $ S^{'} $ is nearly abelian.

\end{proof}

To prove the theorem \ref{cgs2}, we need the following lemma.
\begin{lem}\label{cgs3}
Let $ f $ and $ g $ be two transcendental entire functions and $ \phi  $ be an entire function of the form $ z \to az + b $, where $ a \neq 0 $ such that $ \phi \circ f = g \circ \phi $. Then  $ \phi (I(f)) = I(g), \; \phi (J(f)) =  J(g) $ and $\phi (F(f)) = F(g)$.
\end{lem}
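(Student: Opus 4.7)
The plan is to treat this as a routine conjugation lemma, using the fact that the hypothesis $\phi\circ f=g\circ\phi$ is an intertwining relation that iterates cleanly, together with the fact that $\phi(z)=az+b$ (with $a\neq 0$) is an affine biholomorphism of $\mathbb{C}$.

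First I would prove by induction on $n$ that $\phi\circ f^{n}=g^{n}\circ\phi$ for all $n\geq 1$. The base case $n=1$ is the hypothesis; for the inductive step,
$$\phi\circ f^{n+1}=(\phi\circ f)\circ f^{n}=(g\circ\phi)\circ f^{n}=g\circ(\phi\circ f^{n})=g\circ g^{n}\circ\phi=g^{n+1}\circ\phi.$$
This identity is the engine of everything that follows. Note also that $\phi$ has inverse $\phi^{-1}(w)=(w-b)/a$, which is again affine, so $\phi^{-1}\circ g=f\circ\phi^{-1}$ as well; this symmetry means each inclusion argument runs in both directions and I only need to argue one.

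Next, for the escaping set, I would use that $|\phi(z)|\to\infty$ if and only if $|z|\to\infty$ (this is where the affine form of $\phi$ matters: $|\phi(z)|\geq|a||z|-|b|$ and $|\phi^{-1}(w)|\geq|w|/|a|-|b|/|a|$). So if $z\in I(f)$, then $f^{n}(z)\to\infty$, hence $\phi(f^{n}(z))\to\infty$, and by the identity above this equals $g^{n}(\phi(z))$, giving $\phi(z)\in I(g)$. The reverse inclusion is identical with $f,g$ swapped and $\phi$ replaced by $\phi^{-1}$.

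For the Fatou set, I would exploit that $\phi$ is a conformal homeomorphism of $\mathbb{C}$. If $z\in F(f)$, pick a neighborhood $U$ of $z$ on which $\{f^{n}\}$ is normal. Given any sequence $\{g^{n_{k}}\}$, rewrite it on $\phi(U)$ as $g^{n_{k}}=\phi\circ f^{n_{k}}\circ\phi^{-1}$; pass to a subsequence so that $f^{n_{k}}$ converges locally uniformly on $U$ either to a holomorphic limit $h$ or to $\infty$. Composing on the left with the continuous map $\phi$ (extended to send $\infty\mapsto\infty$ on the Riemann sphere) and on the right with the biholomorphism $\phi^{-1}$ preserves local uniform convergence or divergence on $\phi(U)$, so $\{g^{n}\}$ is normal at $\phi(z)$, and $\phi(z)\in F(g)$. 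Again the reverse inclusion follows by symmetry. The Julia set statement is then immediate because $J(f)=\mathbb{C}\setminus F(f)$, $J(g)=\mathbb{C}\setminus F(g)$, and $\phi$ is a bijection of $\mathbb{C}$.

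The only mild obstacle is verifying that normality really is preserved under the conjugation, including the divergent case $f^{n_{k}}\to\infty$; this is where the affine form of $\phi$ again pays off, since $\phi$ extends continuously to a self-map of the Riemann sphere fixing $\infty$, so $\phi\circ f^{n_{k}}\circ\phi^{-1}\to\infty$ locally uniformly on $\phi(U)$. Beyond this small bookkeeping, the proof is essentially just the iterated intertwining identity together with $\phi$ being an affine biholomorphism.
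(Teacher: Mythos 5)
Your proof is correct. For the escaping set it is essentially the paper's argument: both establish the iterated intertwining identity $\phi\circ f^{n}=g^{n}\circ\phi$ and use that an affine map with $a\neq 0$ carries sequences tending to $\infty$ to sequences tending to $\infty$; your symmetric treatment of the reverse inclusion via $\phi^{-1}\circ g=f\circ\phi^{-1}$ is in fact cleaner than the paper's, whose reverse-inclusion sentence is garbled as written (it starts from ``$z\in I(g)$'' and concludes ``$\phi(z)\in I(g)$'' rather than pulling back by $\phi^{-1}$). Where you genuinely diverge is in the Fatou and Julia sets. The paper derives $\phi(J(f))=J(g)$ from Eremenko's theorem $J(f)=\partial I(f)$ together with the fact that the homeomorphism $\phi$ maps boundaries to boundaries, and then obtains the Fatou statement by taking complements. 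You instead prove $\phi(F(f))=F(g)$ directly by a normal-families argument --- conjugating subsequences of iterates by the affine biholomorphism preserves local uniform convergence and divergence, including the case $f^{n_k}\to\infty$ --- and then get the Julia statement by complementation. Your route is more self-contained, since it does not invoke $J=\partial I$, which is a nontrivial theorem for transcendental entire functions, at the cost of a slightly longer normality verification; the paper's route is shorter given that it has already cited Eremenko's result. Both are valid.
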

\begin{proof}
Let $ w \in \phi (I(f)) $, then there is $ z \in I(f) $ such that $ w = \phi (z) $. The condition  $ z \in I(f) \Longrightarrow f^{n}(z) \to \infty $ as $ n \to \infty $.  Now $ g^{n}(w) = g^{n}(\phi(z)) =( g^{n} \circ \phi)(z) = (g^{n-1}\circ g \circ \phi)(z) = (g^{n-1}\circ \phi \circ f)(z) =( g^{n-2}\circ \phi \circ  f^{2})(z) = \ldots = (\phi \circ f^{n})(z) = \phi (f^{n}(z)) $. Since $ \phi(z) = az + b, \, (a \neq 0) $ and  $f^{n}(z) \to \infty $ as $ n \to \infty$. So we must have $ g^{n}(w)\to \infty$ as $ n \to \infty $. This shows that $ \phi (I(f)) \subset I(g) $. For opposite inclusion, we note that if $ z \in I(g) $ then we must have $\phi( z) \in I(g) $. As above $ \phi(f^{n}(z)) = g^{n}(\phi(z)) \to \infty $ as $ n \to \infty $. This shows that $ z \in \phi (I(f)) $ and so $I(g) \subset \phi(I(f))  $. This proves that $ \phi (I(f)) = I(g)$. Remaining equality obtained from the facts $ \partial I(f) = J(f) $ and $ F(f) =\mathbb{C} \setminus J(f) $.
\end{proof} 
\begin{proof}[Proof of the Theorem \ref{cgs2}]
Let $ \phi \circ  f_{i} \circ \phi^{-1} = g_{i} $ for all $i = 1, 2, \cdots, n$. From which we get $ \phi \circ f_{i} = g_{i}\circ \phi  $ for all $i = 1, 2, \cdots, n$. Any $ f \in S $ and $ g \in S^{'} $ can be written respectively as $ f = f_{i_{1}}\circ f_{i_{2}} \circ \ldots \circ f_{i_{n}} $ and  $ g = g_{i_{1}}\circ g_{i_{2}} \circ \ldots \circ g_{i_{n}} $. From which we get $ \phi \circ f = \phi \circ f_{i_{1}}\circ f_{i_{2}} \circ \ldots \circ f_{i_{n}} = g_{i_{1}} \circ \phi \circ   f_{i_{2}} \circ \ldots \circ f_{i_{n}}  = g_{i_{1}} \circ   g_{i_{2}} \circ  \phi  \circ  \ldots \circ f_{i_{n}}  = \ldots = g_{i_{1}} \circ   g_{i_{2}} \circ   \ldots \circ g_{i_{n}} \circ \phi = g \circ \phi $ for all $ f \in S $ and $ g \in S^{'} $.   Since $S =\langle f_{1}, f_{2}, f_{3}, \ldots, f_{n} \rangle$ be a nearly abelian transcendental entire functions, so  from the {\cite[Theorem 1.1]{sub5}},  we have $I(S) = I(f),\;  J(S) = J(f) $ and ${F(S)} = {F(f)} $ for all $f \in S$.
Now $I(S) = I(f) \Longrightarrow \phi (I(S)) = \phi (I(f))$. By lemma \ref{cgs3},  $\phi(I(f)) = I(g)$. By the theorem \ref{csg1}, semigroup $ S^{'}$ is nearly abelian , so again by the {\cite[Theorem 1.1]{sub5}}, we have $ I(S^{'}) = I(g)$. Thus we get $ \phi (I(S)) = I(S^{'})$. Next two equality are also obtained by the similar fashion.
\end{proof}

\section{Proof of the Theorem \ref{cgs4}}
There is a nice way of writing arbitrary element of nearly abelian transcendental semigroup. Before doing so, we will see how does any $ f \in S $  behave just like semi-conjugacy for some member of $ \Phi(S) $ and  a member from the set generated by $ \Phi(S) $ as shown in the following lemma. Note that the statement and the proof this lemma is analogous to {\cite[Lemma 4.2]{hin}}.   
\begin{lem}\label{lemma}
Let $ S $ be a nearly abelian cancellative transcendental semigroup. Then for any $ f \in S $ and for any $ \phi \in \Phi(S) $, there is a  map $ \xi\in G $, where $ G = \langle \Phi(S) \rangle $ is a  group generated by the elements in $ \Phi(S)  $ such that $ f \circ \phi = \xi \circ f $. 
\end{lem}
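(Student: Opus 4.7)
The plan is to exploit the fact that every $\phi\in\Phi(S)$ arises as a commutator, apply the near-abelian relation twice, and conclude by right cancellation. I would begin by choosing $g,h\in S$ with $\phi = [g,h]$, so that
\[
g\circ h \;=\; \phi\circ h\circ g,
\]
and then evaluate the triple composition $f\circ g\circ h$ in two different ways, comparing the resulting expressions.

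A direct substitution of the commutator relation on the inner factor yields $f\circ g\circ h = f\circ\phi\circ h\circ g$. For the alternative expansion I would apply near abelianness to the pairs $(f,\,g\circ h)$ and $(f,\,h\circ g)$, producing $\phi_{1},\phi_{2}\in\Phi(S)$ with
\[
f\circ (g\circ h) = \phi_{1}\circ (g\circ h)\circ f, \qquad f\circ (h\circ g) = \phi_{2}\circ (h\circ g)\circ f.
\]
Since $[f,g]^{-1}=[g,f]$, the inverse $\phi_{2}^{-1}$ also lies in $\Phi(S)\subseteq G$, so the second identity can be rearranged to $(h\circ g)\circ f=\phi_{2}^{-1}\circ f\circ (h\circ g)$. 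Chaining the first relation, the commutator relation, and the rearranged second relation gives
\[
f\circ g\circ h \;=\; \phi_{1}\circ g\circ h\circ f \;=\; \phi_{1}\circ\phi\circ h\circ g\circ f \;=\; \phi_{1}\circ\phi\circ\phi_{2}^{-1}\circ f\circ h\circ g.
\]
Equating this with $f\circ\phi\circ h\circ g$ and right cancelling $h\circ g$ produces $f\circ\phi = \xi\circ f$ with $\xi := \phi_{1}\circ\phi\circ\phi_{2}^{-1}\in G$, as required.

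The delicate step I expect is the final right cancellation, since cancellativity is postulated inside $S$ but the identity we cancel also contains the commutators $\phi,\phi_{1},\phi_{2}^{-1}$ on the left of $f$. This is resolved by noting that $h\circ g$ is a non-constant entire function and therefore has dense image in $\mathbb{C}$; any two holomorphic maps that agree after post-composition with $h\circ g$ must agree on all of $\mathbb{C}$, which extends the cancellation law from $S$ to the mixed setting above. A minor caveat, and the reason the lemma phrases the conclusion in $G$ rather than $\Phi(S)$, is that $\Phi(S)$ is not assumed to be closed under composition, so the product $\xi = \phi_{1}\circ\phi\circ\phi_{2}^{-1}$ can only be placed inside $G=\langle\Phi(S)\rangle$.
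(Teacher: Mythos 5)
Your proof is correct and follows essentially the same strategy as the paper's: realize $\phi$ as a commutator $[g,h]$, expand $f\circ g\circ h$ in two ways, and right-cancel $h\circ g$; the only difference is that the paper applies near-abelianness to the pairs $(f,g)$ and then $(g, f\circ h)$ to obtain $\xi=\xi_{1}\circ\xi_{2}$ with $\xi_{1},\xi_{2}\in\Phi(S)$, whereas you use the pairs $(f, g\circ h)$ and $(f, h\circ g)$ together with the commutator relation to obtain $\xi=\phi_{1}\circ\phi\circ\phi_{2}^{-1}$. Your justification of the final cancellation via density of the image of the non-constant entire function $h\circ g$ is in fact more careful than the paper's, which simply invokes cancellativity of $S$ even though the identity being cancelled involves the maps $\phi,\xi_{1},\xi_{2}$ that need not lie in $S$.
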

\begin{proof}
For any $ \phi \in \Phi (S) $, there are $  g, h \in S $ such that $ g \circ h = \phi \circ h\circ g $. Then, for any $ f \in S $, we can write 
\begin{equation}\label{eq1}
f \circ g \circ h  =f \circ \phi \circ h\circ g.
\end{equation}
Furthermore,   
\begin{equation}\label{eq3}
f \circ g \circ h  =\xi_{1} \circ g \circ  f\circ h =\xi_{1}\circ \xi_{2} \circ f \circ  h\circ g.
\end{equation}
for some $ \xi_{1}, \xi_{2}\in \Phi(S) $. Since  $ S $ is cancellative semigroup, so
 from the equations \ref{eq1} and \ref{eq3}, we get
$$
f \circ \phi = \xi_{1}\circ \xi_{2} \circ f = \xi \circ f
$$
where $ \xi = \xi_{1}\circ \xi_{2} \in G $.
\end{proof} 
From the equation \ref{eq3}, we can say that composite of two commutators may not be a commutator. 
We investigate a couple of examples of transcendental semigroups  such that essence of above lemma \ref{lemma} holds. 
\begin{exm}
If a  semigroup $ S $ generated by functions $ f(z) = \lambda \cos z \in S$ and $ g(z) = -f(z) $, then it is nearly abelian where $ \phi (z) = -z$ is  the  commutator of $ f $ and $ g $ (see for instance  {\cite[Example 2.2]{sub5}}). Since $ \phi ^{2}(z) = z$, an identity element of a group $ G  =\langle \Phi(S) \rangle  = \{\text{Identity}, \; \phi \}$ such that  $f\; \circ \; \phi = f =\text{Identity}\;  \circ f $ and  $g\;  \circ \;  \phi = g = \text{Identity}\;  \circ g $. 
\end{exm}
\begin{exm}
If a semigroup $ S $ generated by functions $ f(z) = e^{z^{2}} + \lambda $ and $ g(z) = -f(z) $, then it is nearly abelian where $ \phi (z) = -z $ is  the  commutator of $ f $ and $ g $ (see for instance {\cite[Example 2.2]{sub5}}). Since $ \phi ^{2}(z) = z$, an identity element of a group $ G  =\langle \Phi(S) \rangle  = \{\text{Identity}, \; \phi \}$ such that  $f\; \circ \; \phi = f =\text{Identity}\;  \circ f $ and  $g\;  \circ \;  \phi = g = \text{Identity}\;  \circ g $. 
\end{exm}
Also note that in both of these examples, we have $\phi\circ f = -f \neq f = f \circ\xi   $ for any $ \xi \in G $. 
That is, for given  $ \phi \in \Phi(S) $, there may not always possible to find element $ \xi \in G $ satisfying  $\phi\circ f = f\circ\xi   $ for any choice of $ \xi \in G $. 

\begin{proof}[Proof of the Theorem \ref{cgs4}]
The proof of this theorem follows from the inductive application of above lemma \ref{lemma} to each element $ f = f_{i_{1}}\circ f_{i_{2}}\circ \ldots \circ f_{i_{n}} $ of $ S $.

\end{proof}

\end{document}